\documentclass{article}
\usepackage{arxiv}

\usepackage{amsmath}
\usepackage{amssymb}
\usepackage{amsthm}
\usepackage{xcolor}

\usepackage{fontspec}
\usepackage{microtype}
\usepackage{xspace}
\usepackage{fontawesome5} % external-link icon used in \docref
\usepackage[pagebackref]{hyperref}

\usepackage{listings}
\definecolor{keywordcolor}{rgb}{0.7, 0.1, 0.1}   % red
\definecolor{tacticcolor}{rgb}{0.0, 0.1, 0.6}    % blue
\definecolor{commentcolor}{rgb}{0.4, 0.4, 0.4}   % grey
\definecolor{symbolcolor}{rgb}{0.0, 0.1, 0.6}    % blue
\definecolor{sortcolor}{rgb}{0.1, 0.5, 0.1}      % green
\definecolor{attributecolor}{rgb}{0.7, 0.1, 0.1} % red
\definecolor{mycolor}{RGB}{0,100,200}            % frame color for listings
\definecolor{mycolorSubtle}{RGB}{245,250,255}    % background for \myinline

\usepackage{xpatch}
\usepackage{realboxes}
\DeclareRobustCommand{\myinline}{\lstinline}
\makeatletter
\xpretocmd\myinline{\Colorbox{mycolorSubtle}\bgroup\appto\lst@DeInit{\egroup}}{}{}
\makeatother

\newtheorem{theorem}{Theorem}[section]
\newtheorem{lemma}[theorem]{Lemma}
\newtheorem{corollary}[theorem]{Corollary}

\theoremstyle{definition}
\newtheorem{definition}[theorem]{Definition}

\newtheorem{remark}[theorem]{Remark}
\newenvironment{proofsketch}{\begin{proof}[Proof sketch]}{\end{proof}}

\newcommand{\Z}{{\mathbb{Z}}}

\DeclareMathOperator{\val}{val}

\DeclareMathOperator{\indeg}{indeg}
\DeclareMathOperator{\Div}{Div}

\newcommand{\Mathlib}{\textsc{Mathlib}\xspace}

\newcommand{\extlinkicon}{%
  \textsuperscript{\fontsize{6pt}{6pt}\selectfont\faIcon{external-link-alt}}%
}
\DeclareRobustCommand{\docref}[2]{%
  \href{https://dhyeymavani.com/chip-firing-with-lean/docs/ChipFiringWithLean/#1.html\##2}{\texttt{\detokenize{#2}}\,\extlinkicon}%
}

\title{Formalizing Chip-Firing and Riemann--Roch for Graphs in Lean~4}
\shorttitle{Formalizing chip-firing and Riemann--Roch}

\author{
 Dhyey Dharmendrakumar Mavani \\
  Amherst College \\
  Amherst, MA 01002 \\
  \texttt{ddmavani2003@gmail.com} \\
   \And
 Nathan Pflueger \\
  Amherst College \\
  Amherst, MA 01002 \\
  \texttt{npflueger@amherst.edu} \\
}

\begin{document}
\maketitle

\begin{abstract}
The Riemann--Roch theorem for graphs, due to Baker and Norine, is a foundational result establishing a powerful analogy between finite graphs and algebraic curves. We describe a complete formal proof of this theorem implemented in the Lean~4 theorem prover. Our formalization includes the existence and uniqueness of $q$-reduced divisors, a modified form of Dhar's burning algorithm, the bijection between acyclic orientations with unique source and maximal superstable configurations, and Clifford's theorem. We also include several challenges for future formalization.
\end{abstract}

\keywords{chip-firing \and Riemann--Roch \and graphs \and Lean~4 \and formal verification}

%% ============================================================
\section{Introduction}
\label{sec:intro}
%% ============================================================
A chip-firing game is a discrete dynamical system on a graph in which vertices redistribute integer-valued tokens among their neighbors according to simple local rules: a vertex ``fires'' by sending one chip along each incident edge to its neighbors. Chip-firing games exhibit rich combinatorial structure and have been the subject of extensive study. A survey of this theory, whose exposition and notation we mainly follow in the present paper, is Corry and Perkinson's textbook~\cite{corry2018divisors}. Indeed we hope our repository may be a helpful ``Lean companion'' for a reader studying the first five chapters of that book.

There are various ways to create a literal game from these rules, the simplest of which is \emph{the dollar game}: given an allocation of chips on the graph, the player wins if they can perform a sequence of chip-firing moves after which no vertex is in debt.
For an elementary introduction to the dollar game, see \cite{lambogliaulirsch2021}, or try the interactive online game at \url{https://thedollargame.io}~\cite{dollargame}.

Chip-firing games are the setting of a deep analogy between finite graphs and Riemann surfaces. Roughly speaking, the class of a chip placement modulo chip-firing moves is analogous to a line bundle on a compact Riemann surface. The strength of this analogy was demonstrated by Baker and Norine~\cite{baker2007riemann} with their graph-theoretic Riemann--Roch theorem, which is a purely combinatorial avatar of the classical Riemann--Roch theorem for Riemann surfaces.
After this watershed moment, the analogy between graphs and Riemann surfaces has borne fruit in both directions. For example, the tools in \cite{baker2008} make it possible to prove theorems about Riemann surfaces using combinatorics of chip-firing games. This technique found early success in a tropical proof, via chip-firing games, of the Brill--Noether theorem~\cite{cools2012tropical}. Since then, the analogy has been used in many cases to prove new theorems about Riemann surfaces; one example is the first proof of the fixed-gonality analogue of the Brill--Noether theorem, which was conjectured by the second author~\cite{pflueger2017brill} and proved by Jensen and Ranganathan~\cite{jensenranganathan2021fixedgonality}, in both cases by way of chip-firing games. More about the analogy to Riemann surfaces, and the rich interplay it has made possible, may be found in the popular article \cite{hartnett2018tinkertoy}, surveys \cite{jensen2021chipfiringcurves,ranganathan2023tropicalforwards}, and the references therein.

Our objective is to lay the groundwork for the formalization of the theory of chip-firing games, especially the aspects relevant to Riemann surfaces.
The first major milestone in this effort is a formalization of the graph-theoretic Riemann--Roch theorem in Lean~4. The formalization contains no \texttt{sorry}s, introduces no additional axioms, and relies on the \Mathlib library~\cite{mathlib2020}.
Our code is available at the link below, and the reader will find links to its documentation distributed throughout the text of this paper.

\begin{center}\url{https://github.com/DhyeyMavani2003/chip-firing-with-lean}\end{center}

To our knowledge, this is the first formalized proof of Riemann--Roch for graphs in any proof assistant. Our work includes new Lean types for divisors, configurations, and graph orientations, together with a development of the Baker--Norine rank function. Along the way, we prove the existence and uniqueness of $q$-reduced divisors, formalize a version of Dhar's burning algorithm, establish the bijection between acyclic orientations with unique source and maximal superstable configurations, and deduce Clifford's theorem. The formalization closely follows the exposition \cite{corry2018divisors} and was developed as part of an honors thesis~\cite{mavani2025chipfiringthesis} at Amherst College. The project comprises approximately 5,400 lines of Lean~4 code across eight files.
At the end of \myinline|RiemannRoch.lean| we also include formalized statements (without proofs) of four conjectures of Matt Baker, two of which are now proved, and two of which are open, which we offer as challenges to the formalization community and possible candidates to benchmark artificial intelligence tools.

The remainder of this paper is organized as follows. Section~\ref{sec:notation} introduces the terminology of divisors, configurations, and Baker--Norine rank. With the notation in place, we then explain the proof backwards. 
Section~\ref{sec:rr} presents the formalization of the Riemann--Roch theorem itself, and reduces its proof to a duality of maximal unwinnable divisors.
Section~\ref{sec:qred} discusses a key technical step: the existence of $q$-reduced divisors and superstable configurations.
Section~\ref{sec:dhar} explains Dhar's burning algorithm in its usual form and how we have chosen to formalize it.
Section~\ref{sec:orient} describes the formalization of graph orientations and the orientation--superstable bijection, and completes the proof of Riemann--Roch. 
Section~\ref{sec:future} concludes with possibilities for future work, including a statement of four formalization challenges based on conjectures of Matt Baker.

\subsection*{Acknowledgements}
We are grateful to Matt Baker and Daniel Velleman for helpful comments.
The second author was supported by a D. Crary Sabbatical Fellowship from Amherst College, and is grateful to the Max Planck Institute for Mathematics in the Sciences for its hospitality during the conclusion of this work.

\subsection*{Disclosure on generative artificial intelligence}

Portions of our code repository were developed with the aid of generative artificial intelligence, including GitHub Copilot, Claude Code, and OpenAI Codex. The primary input from these tools was autocompletion of routine steps, simplification of code, and initial drafting of docstrings. All formal statements, organizational choices, and design decisions were made by the authors. Generative AI was also used for limited proofreading and revisions of this manuscript.

%% ============================================================
\section{Terminology}
\label{sec:notation}
%% ============================================================

This section recalls the necessary definitions, following the exposition of Corry and Perkinson~\cite{corry2018divisors} and Baker and Norine~\cite{baker2007riemann}.
Here and throughout the paper, we include links to our repository's documentation, which are indicated with the \extlinkicon\, symbol.
From there the reader can find the statement in Lean syntax and a link to the source code.

\subsection{Graphs}
Throughout the paper, $G = (V, E)$ denotes a finite, loopless multigraph: $V(G)$ is a finite nonempty set of vertices, and $E(G)$ is a multiset of unordered pairs $\{v, w\}$ with $v \ne w$. This is formalized in the \docref{Basic}{CFGraph} structure. We will usually assume that $G$ is connected, which is formalized as \docref{Basic}{graph_connected}. For this project, the most practical definition of connectivity is in terms of cut sets: if $S$ is a nonempty proper subset of $V(G)$, then there is at least one edge from $S$ to its complement.

\begin{remark}\label{rem:mathlibgraph}
In future updates, we plan to refactor our definition to build on \Mathlib's existing multigraph definition,
\href{https://leanprover-community.github.io/mathlib4_docs/Mathlib/Combinatorics/Graph/Basic.html#Graph}{\texttt{\detokenize{Graph}}\,\extlinkicon}%
. The reason for our bespoke \docref{Basic}{CFGraph} definition is simple: when this project began in 2024, this definition was not yet available in \Mathlib. At the time of writing, the \Mathlib graph library has pull requests under review that may simplify this refactor, so we have chosen to wait until that process is completed before proceeding.
\end{remark}

For practical purposes, almost all interactions with $G$ are mediated through its adjacency matrix, which is represented by a symmetric, nonnegative function \docref{Basic}{num_edges} with zero diagonal. Users of the library are strongly urged to treat this function as the outward-facing interface of $G$, so that the internal representation is not exposed unnecessarily.

The \emph{genus} (\docref{Basic}{genus}) of $G$ is the integer $g = |E(G)| - |V(G)| + 1$. For connected graphs this is the first Betti number, also called the cyclomatic number; it is called the genus in chip-firing games to highlight the analogy with Riemann surfaces.

\subsection{Divisors}
A \emph{divisor} (\docref{Basic}{CFDiv}) on $G$ is an element of the free abelian group $\Div(G) = \Z^V$, or equivalently a function $V(G) \to \Z$. The abelian group structure is inherited automatically from \Mathlib's
\href{https://leanprover-community.github.io/mathlib4_docs/Mathlib/Algebra/Group/Pi/Basic.html#Pi.addCommGroup}{\texttt{\detokenize{Pi.addCommGroup}}\,\extlinkicon}.
If $D \in \Div(G)$, then $D(v)$ gives the number of chips at vertex $v$, with negative values representing debt. The \emph{degree} (\docref{Basic}{deg}) of a divisor is $\deg(D) = \sum_{v \in V} D(v)$, which in the dollar game interpretation represents the total wealth in the system.

Given a divisor $D$ and a vertex $v$, a \emph{firing move} at $v$ produces a new divisor $D'$ in which $v$ sends one chip along each incident edge, represented by adding the \docref{Basic}{firing_vector} of $v$. The \docref{Basic}{principal_divisors} are the subgroup generated by these firing vectors. Two divisors are \emph{linearly equivalent} (\docref{Basic}{linear_equiv}) if they differ by a principal divisor, meaning one can be obtained from the other by a sequence of firing moves. We write $D \sim D'$ to indicate that $D$ and $D'$ are linearly equivalent.

An important special case is the principal divisor resulting from firing all vertices in a set $S$ exactly once. Such an operation is called a \emph{set firing}. The effect of set-firing $S$ is to send one chip along each edge from $S$ to $V(G) \setminus S$. Any principal divisor may be represented as a sequence of set firings from nested sets $S_n \subseteq S_{n-1} \subseteq \ldots \subseteq S_1$, where $S_i$ is the set of vertices that are fired at least $i$ times.

A divisor is \docref{Basic}{effective} if it assigns nonnegative values to every vertex. A divisor is \docref{Basic}{winnable} if it is linearly equivalent to some effective divisor. The objective of the dollar game is thus to determine whether a given divisor is winnable.

One specific divisor is of particular importance: the \docref{Orientation}{canonical_divisor}, usually denoted $K$. It is defined by $K(v) = \val(v) - 2$, where $\val(v)$ is the degree of vertex $v$. A straightforward calculation via the handshaking lemma shows that $\deg K = 2g-2$. Though straightforward in prose, the argument is a bit technical in Lean; it can be found at \docref{Orientation}{degree_of_canonical_divisor}. The canonical divisor of a graph, just like a canonical divisor on a compact Riemann surface, plays a crucial symmetrizing role in the Riemann--Roch theorem.

\subsection{Configurations and superstability}

We follow the terminology of \cite[\S 2]{corry2018divisors} in making the following subtle linguistic distinction: a \emph{configuration} (\docref{Config}{Config}) with respect to a marked vertex $q$ is a divisor in which we forget the number of chips at $q$. In other words, the group of configurations is $\operatorname{Div}(G) / q \Z$. One imagines $q$ as a sink into which chips can be fired but where they subsequently disappear.
However, our code modifies this usage in one important respect: our \myinline|Config G q| type includes only \emph{nonnegative} configurations, i.e. those with no debt at any $v \ne q$. This is a matter of convenience: most of the load-bearing theorems about configurations in our repository assume nonnegativity anyway. We also make a pragmatic implementation choice and represent a configuration as a divisor in which $D(q) = 0$, even though we really want to think of $D(q)$ as undefined. This choice allows us to invoke chip-firing operations on configurations in a simple way; we simply re-zero the number of chips at $q$ afterward. This also allows us a mild abuse of notation in prose (which must be mediated carefully by the conversion functions \docref{Config}{toConfig} and \docref{Config}{toDiv} in Lean): if $c$ is a configuration with respect to a vertex $q$, we will write ``the divisor $c-q$'' for the divisor $D$ with $D(v) = c(v)$ at $v \ne q$ and $D(q) = -1$. This is consistent with the notation of \cite{corry2018divisors}.

Of particular importance are the \docref{Config}{superstable} configurations. These are the configurations for which there is no debt, but from which firing any nonempty set \emph{not including $q$} would put some vertex of that set into debt. The corresponding notion for divisors is
\docref{Basic}{q_reduced}. A key feature of configurations, which occupies a significant fraction of the lines of code in our repository, is that any configuration can be made superstable by firing a finite sequence of sets not containing $q$, and that the resulting superstable configuration is unique. We return to discuss the formalization of this result in Section~\ref{sec:qred}.

\begin{remark}
The presence of two distinct types---divisors and configurations---creates the need in our repository for many lemmas transferring attributes of configurations to attributes of divisors and vice versa. In hindsight, it is possible that in the formalization this parallel terminology adds more complexity than it prevents, and the overall proof could be made shorter by removing the ``configuration'' nomenclature and simply working with divisors throughout. This issue is endemic to proof formalization and takes some acclimation; the two words make arguments simpler and easier to follow in prose because one can fluently switch between them as the number of chips at $q$ becomes relevant or irrelevant, but the situation is quite different in Lean. Nevertheless, we have preserved this dual nomenclature in an effort to faithfully mirror the organization of \cite{corry2018divisors}, which uses the vocabulary of superstable configurations throughout.
\end{remark}

\subsection{The Baker--Norine rank}

The notion of \emph{winnability} may be understood as arising from a solitaire game: given a chip configuration $D$, Alice plays alone, making chip-firing moves at will, and wins if she can eliminate debt. Subtler information about a divisor $D$ is revealed by adding a second player. Suppose now that Alice has an opponent, Bob, who is allowed to remove a chip, possibly creating debt, every time Alice succeeds in eliminating debt. Of course, Bob will always eventually cause Alice to lose, but an interesting quantitative question remains: what is the maximum number of moves Bob can make such that Alice, playing optimally, can still win? The answer to this question may be understood as quantifying ``how winnable'' $D$ is, and it is called the \emph{Baker--Norine rank} of $D$, denoted $r(D)$. By convention, an unwinnable divisor has rank $-1$. More formally, $r(D) = \deg E - 1$, where $E$ is a minimum-degree effective divisor for which $D-E$ is unwinnable.
Computing the rank on a general graph is NP-hard~\cite{kiss2015chip}, although it is a finite computation.

Defining the function $r(\cdot)$ in Lean is a useful illustration of a subtlety in how Lean handles definitions. That is, the function $r(D)$ is not defined by a formula or procedure, but by the following two propositions characterizing it.

\begin{lstlisting}
def rank_geq (G : CFGraph) (D : CFDiv G) (k : ℤ) : Prop :=
  ∀ E ∈ eff_of_degree G k, winnable G (D-E)

def rank_eq (G : CFGraph) (D : CFDiv G) (r : ℤ) : Prop :=
  rank_geq G D r ∧ ¬(rank_geq G D (r+1))
\end{lstlisting}

But Lean is a programming language, so it wants a functional definition.
This is done by first proving a lemma \docref{Rank}{rank_exists} and then preposterously defining \docref{Rank}{rank} via the axiom of choice, represented in Lean with the function
\href{https://leanprover-community.github.io/mathlib4_docs/Init/Classical.html#Classical.choose}{\texttt{\detokenize{Classical.choose}}\,\extlinkicon}.
This function takes a proof of an existential proposition, and ``returns'' a witness. Of course, this is not a ``real'' function, since existential propositions can be proved nonconstructively. The result, unfortunately, is that our \emph{definition} of $r(D)$ is \myinline|noncomputable|.

\begin{remark}\label{rem:computable}
The constructivist reader may well feel uneasy, perhaps righteously angry, that we apparently used the axiom of choice to define the result of a finite computation. We sympathize with this unease. We would be happy to see this definition replaced with a computable one in a future update. In principle, there is no obstacle: one can enumerate all effective divisors $E$ on $G$ in order of degree, checking the winnability of each $D-E$ (which is decidable, for example by Dhar's algorithm), until a non-winnable $D-E$ is found, and return $\deg E - 1$. This would have the benefit that Lean could then actually evaluate the function on small examples. We have not done so purely as a matter of convenience, and because the practical uses of a constructive definition are limited by the NP-hardness of the problem.
\end{remark}

A practical consequence of this type of definition is that one typically does not interact directly with the definition, but rather accesses its properties using interface lemmas such as \docref{Rank}{rank_geq_iff}, which equates \myinline|rank_geq G D k| to \myinline|rank G D ≥ k|.

%% ============================================================
\section{The Riemann--Roch theorem}
\label{sec:rr}
%% ============================================================

In the interest of motivating the technical matters to follow, we will now work backwards from the end goal, which is the graph-theoretic Riemann--Roch theorem, due to Baker and Norine~\cite{baker2007riemann}.

\begin{theorem}[\docref{RiemannRoch}{riemann_roch_for_graphs}]
For any divisor $D$ on a connected graph $G$ of genus $g$ with canonical divisor $K$,
\[
  r(D) - r(K - D) = \deg(D) + 1 - g.
\]
\end{theorem}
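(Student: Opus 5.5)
The proof follows Baker and Norine's original strategy, organized around the paper's announced reduction to a duality of maximal unwinnable divisors. The first step is to fix a vertex $q$ and use the existence and uniqueness of $q$-reduced divisors (Section~\ref{sec:qred}), together with Dhar's burning algorithm, to characterize winnability: a divisor $D$ is winnable if and only if its $q$-reduced representative $D'$ satisfies $D'(q)\ge 0$. Next comes the orientation--superstable bijection. For an acyclic orientation $\mathcal O$ with unique source $q$, define the divisor $D_{\mathcal O}(v)=\indeg_{\mathcal O}(v)-1$. Then the maximal superstable configurations are exactly the $c$ with $c-q = D_{\mathcal O}$ for such $\mathcal O$. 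Consequently every maximal unwinnable divisor (one that is unwinnable but becomes winnable after adding a chip anywhere) is linearly equivalent to some $D_{\mathcal O}$, each $D_{\mathcal O}$ is maximal unwinnable, and every such divisor has degree $g-1$.

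The key duality comes from reversing orientations. Since $\indeg_{\mathcal O}(v)+\indeg_{\overline{\mathcal O}}(v)=\val(v)$, we get $D_{\mathcal O}+D_{\overline{\mathcal O}}=K$. Hence $D$ is maximal unwinnable if and only if $K-D$ is. Every acyclic orientation has some source, so I would state the result for all acyclic orientations, which avoids fussing about where the unique source is. From this, two consequences follow. First, $\deg K = 2g-2$, which the paper already has. Second, and this is the RR criterion of Baker--Norine: if $\deg D = g-1$, then $D$ is winnable if and only if $K-D$ is. To prove the second, note that if $D$ is unwinnable, it lies below some maximal unwinnable divisor $N$, meaning $D\le N'$ for some $N'\sim N$. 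Since $\deg N = g-1 = \deg D$, we get $D\sim N$, so $K-D\sim K-N$ is unwinnable. Symmetry gives the converse.

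The theorem itself then follows by the standard two-inequality argument. Writing $r=r(D)$, it suffices to show $r(D)-r(K-D)\ge \deg D+1-g$ for all $D$; applying this to $K-D$ and using $\deg K=2g-2$ gives the reverse inequality. For the one inequality, the plan is as follows.
\begin{enumerate}
\item Pick an effective $E$ with $\deg E=r+1$ and $D-E$ unwinnable.
\item Extend $D-E$ to a maximal unwinnable $N\ge D-E$ (up to equivalence). Then $N-(D-E)$ is effective, say $N=D-E+F$ with $F\ge 0$.
\item By duality, $K-N = (K-D)-F+E$ is unwinnable. Since $E\ge 0$, $(K-D)-F$ is unwinnable too.
\item Conclude $r(K-D)\le \deg F-1$.
\end{enumerate}
Then $\deg F=\deg N-\deg D+\deg E = g-1-\deg D+r+1$, which gives exactly the desired inequality. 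The edge cases need checking: when $D$ is unwinnable we take $E=0$, and rank $-1$ fits the same formula.

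The main obstacle is the orientation--superstable bijection and its corollary that every unwinnable divisor is dominated, up to equivalence, by some $D_{\mathcal O}$. The approach is as follows. Take the $q$-reduced form $D'$ of an unwinnable $D$, so $D'(q)<0$. Increasing $D'$ at $q$ up to $-1$ keeps it reduced and unwinnable. Then enlarge the superstable part $c$ to a maximal superstable $c'\ge c$. To show $c'-q=D_{\mathcal O}$, run Dhar's burning algorithm from $q$ and orient each edge in the direction the fire travels across it. Maximality forces $c'(v)=\indeg(v)-1$, and burning order makes the orientation acyclic with unique source $q$. Conversely, we must show $c_{\mathcal O}$ is superstable (burning along a topological order) and maximal (adding a chip anywhere breaks superstability, using $\deg = g-1$ and the fact that superstables have degree at most $g$). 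Formalizing the burning order and acyclicity is where I expect most of the effort to go.
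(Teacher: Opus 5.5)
Your architecture matches the paper's. The divisors $D_{\mathcal O}$ are the moderators, and $D_{\mathcal O}+D_{\overline{\mathcal O}}=K$ supplies the symmetry. Domination of an unwinnable divisor comes from its $q$-reduced form, a maximal superstable configuration above it, and the orientation along a burn order. Your four-step inequality, followed by symmetrization with $\deg K=2g-2$, is exactly the paper's route through Lemma~\ref{lem:RRineq}.

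There is, however, a gap at the duality step. In step~3 you need $K-N\sim K-D_{\mathcal O}=D_{\overline{\mathcal O}}$ to be unwinnable, where $\mathcal O$ is the burn orientation with unique source $q$. The sources of $\overline{\mathcal O}$ are the sinks of $\mathcal O$, and there are typically several. For example, on the path $a-q-b$ with both edges directed away from $q$, one gets $D_{\overline{\mathcal O}}=q-a-b$. Your tools for orientation divisors are the $q$-reduced winnability criterion and the superstable correspondence. Both apply only to unique-source orientations, where $D_{\mathcal O}=c_{\mathcal O}-q$ with $c_{\mathcal O}$ a nonnegative superstable configuration. Your remark that ``every acyclic orientation has some source'' does not extend this. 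If you choose one source $s_0$ of $\overline{\mathcal O}$ as the base vertex, every other source $s$ still has $D_{\overline{\mathcal O}}(s)=-1$. So $D_{\overline{\mathcal O}}$ is not $c-s_0$ for any nonnegative configuration $c$, and neither tool says anything about it. As written, ``$D$ is maximal unwinnable iff $K-D$ is'' is therefore unsupported, and with it step~3 and your degree-$(g-1)$ criterion.

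The missing ingredient is a direct proof that $D_{\mathcal O}$ is unwinnable for \emph{every} acyclic $\mathcal O$. This is why the paper defines the moderators as $D(\mathcal O)$ over all acyclic orientations, a set closed under reversal, and proves unwinnability on the whole set (\texttt{ordiv\_unwinnable}). The argument is short:
\begin{itemize}
\item Suppose a firing script $\sigma$ made $D_{\mathcal O}$ effective, and let $S$ be the set where $\sigma$ is maximal.
\item Acyclicity gives $v\in S$ with no incoming edges from $S$.
\item Every incoming edge of $v$ therefore joins it to a vertex fired strictly fewer times, so $v$ loses at least $\indeg_{\mathcal O}(v)$ chips net.
\item Since $v$ starts with $\indeg_{\mathcal O}(v)-1$ chips, it ends with at most $-1$, a contradiction.
\end{itemize}
An alternative route: turning a source into a sink changes $D_{\mathcal O}$ by a principal divisor, so you could reverse sources other than $q$ until $q$ is the unique source. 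That approach needs its own termination argument.

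With the direct lemma, step~3 only requires $K-N$ to be unwinnable rather than maximal unwinnable, and the rest of your proof goes through as in the paper.
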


The repository also formalizes several standard corollaries of Riemann--Roch, following \cite{corry2018divisors}: Clifford's theorem (\docref{RiemannRoch}{clifford_theorem}), the determination of $r(D)$ by $\deg(D)$ whenever $\deg(D)$ lies outside the range $[0, 2g-2]$ (\docref{RiemannRoch}{rank_nonspecial_range}), the fact that $D$ is maximal unwinnable if and only if $K - D$ is (\docref{RiemannRoch}{maximal_unwinnable_symmetry}), and the bound $g+1$ on the gonality of a connected graph (\docref{RiemannRoch}{gonality_leq_genus_add_one}; see Section~\ref{sec:future} for the definition of gonality).

Our formalization, following the strategy of \cite{corry2018divisors}, first proves an inequality of the following form.

\begin{lemma}[\docref{RRGHelpers}{rank_degree_inequality}] \label{lem:RRineq}
If $D$ is a divisor of degree $d$ on a connected graph $G$, then
\[ r(K-D) < g - d + r(D). \]
\end{lemma}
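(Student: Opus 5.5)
The plan is to compare $D$ with a maximal unwinnable divisor and then use the duality $N \mapsto K - N$. Write $r = r(D)$. By definition of the rank, since $r(D) \geq r+1$ fails, there is an effective divisor $E$ of degree $r+1$ such that $D - E$ is unwinnable. This also covers $r = -1$: there $E = 0$ and $D$ itself is unwinnable.

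Next, I would enlarge $D - E$ to a \emph{maximal} unwinnable divisor $N \geq D - E$. The input here is the structure theorem for maximal unwinnable divisors. Every unwinnable divisor is dominated by one of the form $N = c - q$, with $c$ a maximal superstable configuration. Equivalently, $N = D_{\mathcal O} - q$ for some acyclic orientation $\mathcal O$ with unique source $q$, where $D_{\mathcal O}(v) = \indeg_{\mathcal O}(v) - 1$. Such an $N$ has degree $|E(G)| - |V(G)| = g - 1$. The dual fact is that $K - N = D_{\overline{\mathcal O}}$, computed directly from $\indeg_{\mathcal O}(v) + \indeg_{\overline{\mathcal O}}(v) = \val(v)$. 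Here $\overline{\mathcal O}$ is the reversed orientation, and $D_{\overline{\mathcal O}}$ is unwinnable because it is the divisor of an acyclic orientation.

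To obtain $N$, I would $q$-reduce $D - E$ for any vertex $q$. Unwinnability forces the reduced divisor $D'$ to satisfy $D'(q) \leq -1$. I would then replace its configuration part by a maximal superstable $c \geq D'|_{V\setminus q}$ and use the orientation--superstable bijection. Since $D' \sim D - E$, the argument should be run with $D'$ in place of $D - E$; linear equivalence does not change $r(K-D)$.

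Now set $F = N - (D - E)$, which is effective with
\[ \deg F = (g-1) - d + (r+1) = g - d + r. \]
Then $K - D - F = (K - N) - E$. This divisor is unwinnable: if it were linearly equivalent to an effective divisor, then adding the effective divisor $E$ would make $K - N$ winnable, a contradiction. Hence $r(K-D) < \deg F = g - d + r(D)$, as claimed.

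The main obstacle is the second step. We need the existence of a maximal unwinnable divisor above any unwinnable one, together with the unwinnability of $K - N$ for such $N$. Both depend on the existence of $q$-reduced divisors, Dhar's burning algorithm, and the acyclic-orientation/maximal-superstable correspondence of the later sections. Once those are available, the degree bookkeeping above is routine.
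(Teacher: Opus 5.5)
Your proposal is correct and follows the paper's argument essentially step for step: you choose $E$ effective of degree $r(D)+1$ with $D-E$ unwinnable, and dominate $D-E$ up to linear equivalence by an acyclic-orientation divisor of degree $g-1$ (the paper's moderator, obtained via $q$-reduction, a maximal superstable configuration, and the burn-list orientation). You then use that $K$ minus this divisor is the reversed-orientation divisor, hence unwinnable, to get $r(K-D) < \deg F = g-d+r(D)$. One notational slip: since $q$ is the unique source, $D_{\mathcal O}(q) = -1$ already, so $c - q = D_{\mathcal O}$ rather than $D_{\mathcal O} - q$, and it is $N = D_{\mathcal O}$ that your degree count $g-1$ and the identity $K - N = D_{\overline{\mathcal O}}$ actually require.
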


The proof of Riemann--Roch from this inequality is short and clean. Apply the inequality to both $D$ and $K-D$, thereby giving an upper and lower bound on $r(D) - r(K-D)$. All that is needed in the proof is this pair of inequalities, the integrality of the rank function, and the equation $\deg K = 2g-2$; in the formalization, the \myinline|linarith| tactic handles the algebra.

We now arrive at the core conceptual link in the Riemann--Roch story, which answers: where does the duality come from? Following terminology of Mikhalkin and Zharkov~\cite{mikhalkinZharkov}, we can understand this central duality as the following symmetry property of maximal unwinnable divisors.

\begin{lemma} \label{lem:moderators}
There exists a set $\mathcal{M}$ of divisors on a connected graph $G$, called \emph{moderators}, such that:
\begin{enumerate}
\item (\docref{RRGHelpers}{moderator_symmetry}) For any moderator $M$, the divisor $K-M$ is also a moderator;
\item (\docref{RRGHelpers}{moderator_degree}) Any moderator has degree $g-1$;
\item (\docref{RRGHelpers}{unwinnable_of_moderator}) Any moderator is unwinnable;
\item (\docref{RRGHelpers}{moderator_of_unwinnable}) For any unwinnable divisor $D$, there exists an effective divisor $E$ and a moderator $M$ such that $D + E \sim M$.
\end{enumerate}
\end{lemma}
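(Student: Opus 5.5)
We take $\mathcal{M}$ to be the set of divisors linearly equivalent to $\nu_O := \sum_{v} (\indeg_O(v) - 1)\, v$ for some acyclic orientation $O$ of $G$. The proof then rests on three standard facts. First, $q$-reduced divisors exist and are unique (Section~\ref{sec:qred}). Second, Dhar's burning algorithm (Section~\ref{sec:dhar}). Third, the bijection between acyclic orientations with unique source $q$ and maximal superstable configurations (Section~\ref{sec:orient}).

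\textbf{Properties 1 and 2.} Degree is preserved by linear equivalence, so it suffices to compute $\deg \nu_O = \sum_v \indeg_O(v) - |V| = |E| - |V| = g-1$, since every edge contributes exactly one indegree. For symmetry, let $\bar O$ be the reversed orientation, which is again acyclic. Since $\indeg_O(v) + \indeg_{\bar O}(v) = \val(v)$, we get
\[ \nu_O + \nu_{\bar O} = \sum_v (\val(v) - 2)\,v = K. \]
Hence $K - \nu_O = \nu_{\bar O}$, and if $M \sim \nu_O$ then $K - M \sim \nu_{\bar O}$.

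\textbf{Property 3.} It is cleanest to first show that every moderator is equivalent to some $\nu_{O'}$ in which $O'$ has a unique source $q$. We do this by repeatedly reversing edges at sources other than $q$. Reversing all edges at a source $s$ changes $\nu_O$ by exactly the firing vector of $s$ (with a sign), so linear equivalence is preserved. Standard reachability arguments show that this process terminates with $q$ as the only source.

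For such $O'$, we have $\nu_{O'} = c - q$, where $c(v) = \indeg_{O'}(v) - 1 \ge 0$ for $v \ne q$. Dhar's algorithm shows $c$ is superstable: burning from $q$ along $O'$ burns every vertex, since each $v$ has at least $\indeg(v) > c(v)$ burnt incoming edges once its predecessors burn. So $c - q$ is $q$-reduced with a negative value at $q$. A $q$-reduced divisor is winnable iff it is effective, so $c - q$ is unwinnable.

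\textbf{Property 4.} Let $D$ be unwinnable, and fix $q$. Replace $D$ by its $q$-reduced representative $c + kq$, where $c$ is superstable. Then $k < 0$, since otherwise $D$ would be winnable. Superstable configurations are dominated by maximal superstables, so choose a maximal $c^* \ge c$. By the bijection, $c^* - q = \nu_O$ for an acyclic orientation $O$ with unique source $q$. Set $E = (c^* - c) + (-1-k)q$, which is effective. Then $D + E \sim c^* - q = \nu_O$, a moderator.

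\textbf{Main obstacle.} The substantive steps are the orientation--superstable bijection, in particular that every maximal superstable arises as $\nu_O - $ (correction at $q$), and the source-reversal normalization used in Property 3. In the formalization, I would sidestep the normalization by defining $\mathcal{M}$ directly as $\{ \nu_O + \text{principal} \}$ over acyclic $O$ with a unique source. I would then prove symmetry by normalizing $\bar O$, whose unique sink becomes a source after reversal.
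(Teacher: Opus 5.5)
Your argument is correct, and Properties 1, 2 and 4 are essentially the paper's. The paper takes $\mathcal{M}=\{D(\mathcal{O})\}$ literally rather than closing it under $\sim$, which is immaterial. Where you cite surjectivity of the orientation--superstable bijection, the paper derives exactly that half inline: it orients edges along a burn list of $c'$, gets $c'-q\le D(\mathcal{O})$, and maximality forces equality.

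The real divergence is Property 3. You normalize to a unique source by source flips; each flip is a borrowing move, so the class is unchanged. You then use Dhar to see that $c_{O'}$ is superstable, and invoke ``a $q$-reduced divisor is winnable iff it is nonnegative at $q$''. The paper instead runs Baker--Norine's one-step argument, which works for an arbitrary acyclic $\mathcal{O}$. Given a winning firing script $\sigma$, let $S$ be the set where $\sigma$ is maximal, and pick $v\in S$ with no $\mathcal{O}$-edge into it from $S$. Each incoming edge then carries at least one chip away from $v$, leaving $v$ with at most $\indeg_{\mathcal{O}}(v)-1-\indeg_{\mathcal{O}}(v)=-1$ chips.

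The paper's route needs no normalization, no Dhar, and no reduced-divisor criterion. That criterion itself rests either on uniqueness of $q$-reduced divisors (which the paper notes is otherwise unnecessary for Riemann--Roch) or on this same maximal-set argument. So the paper's route is much cheaper to formalize. Your route buys mainly a conceptual link: every moderator is, up to equivalence, $c-q$ with $c$ superstable for any chosen $q$, which matches the picture used in Property 4.

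If you keep your route, the flip-termination claim needs a real argument, not an appeal to ``standard reachability''. One option: between two flips of $s$, every neighbor of $s$ must flip, and $q$ never flips, so by connectivity a vertex at distance $d$ from $q$ flips at most $d$ times.

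Your closing remark also does not sidestep normalization. If $\mathcal{M}$ is built only from unique-source orientations, then $\overline{O}$ has $q$ as its unique \emph{sink*}, but every sink of $O$ becomes a source of $\overline{O}$. Proving symmetry would therefore require the same flip argument.
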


Together, these properties imply that every maximal unwinnable divisor is linearly equivalent to a moderator, and conversely that every moderator is maximal unwinnable.
The symmetry $K - \mathcal{M} = \mathcal{M}$ is the combinatorial duality that drives the Riemann--Roch formula; as we will see shortly this duality arises from the reversal of acyclic orientations on $G$. We will explain the proof and formalization of Lemma~\ref{lem:moderators} soon, but first we explain how it implies Lemma~\ref{lem:RRineq} and therefore the Riemann--Roch formula.

\begin{proof}[Proof sketch of Lemma~\ref{lem:RRineq} from Lemma~\ref{lem:moderators}]
By definition of $r(D)$, there exists an effective divisor $E$ of degree $r(D)+1$ such that $D-E$ is unwinnable. Therefore there exists a second effective divisor $F$ such that $D - E + F \sim M$ for some moderator $M$. By the symmetry of moderators, $K-M$ is also a moderator, hence unwinnable. So $(K-D) - F + E$ is unwinnable, and in turn so is $(K-D) - F$. This shows $r(K-D) < \deg F = \deg M - \deg D + \deg E = g-d+r(D)$.
\end{proof}

We now continue to work backwards from our goal, through the tools needed to establish the properties of moderators in Lemma~\ref{lem:moderators}.

%% ============================================================
\section{Existence of $q$-reduced representatives}
\label{sec:qred}
%% ============================================================

The existence and uniqueness of a \emph{$q$-reduced representative}, as defined below, for every divisor class is a crucial step in our arguments, and presents several challenges to formalization that are not immediately apparent to the reader of the one-page prose proof in, e.g., \cite[Theorem~3.6]{corry2018divisors}. This is the portion of our formalization that is most original in its approach; our goal was to give simply stated and specific induction arguments.

\begin{definition}
A divisor $D$ is \docref{Basic}{q_reduced} if it has no debt away from $q$, and set-firing any nonempty set $S$ not containing $q$ would create debt somewhere. Equivalently (by \docref{Config}{q_reduced_superstable_correspondence}), the configuration associated to $D$ is superstable.
\end{definition}

The subtlety arises from checking the termination of two recursive processes; to do so we designed an argument somewhat different from what is most natural in prose. The first step is to concentrate all debt on a single vertex. We call a divisor $D$ \docref{Basic}{q_effective} if $D(v) \ge 0$ for all $v \ne q$.

\begin{lemma}[\docref{Basic}{q_effective_exists}]
If $D$ is a divisor on a connected graph $G$, and $q \in V(G)$ is any vertex, then there exists a $q$-effective divisor $D'$ linearly equivalent to $D$.
\end{lemma}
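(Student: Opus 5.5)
Our approach is to move all debt toward $q$ by firing, and to control termination with a single explicit induction rather than an open-ended ``keep borrowing'' process. The cleanest such argument uses distance from $q$. Since $G$ is connected, every vertex $v$ has a finite graph distance $\delta(v)$ to $q$. Let $m = \max_v \delta(v)$, and define the nested sets $S_k = \{v : \delta(v) \ge k\}$ for $k = 1,\dots,m$. None of these contains $q$, and every vertex $v$ with $\delta(v)=k\ge 1$ has a neighbor at distance $k-1$, which lies outside $S_k$.

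We then argue by induction on the distance layers, from the outside in. Suppose all vertices with $\delta(v) > k$ are already debt-free. We now \emph{reverse}-fire, i.e.\ borrow into, the set $S_k$ some number $N$ of times. Borrowing into $S_k$ does not change vertices strictly inside $S_k$ that have no neighbors outside $S_k$. It changes boundary vertices of $S_k$ only by adding chips, because every edge crossing the cut sends one chip inward. It removes chips only from vertices outside $S_k$, which lie at smaller distance.

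Each vertex at distance exactly $k$ gains at least $N$ chips, since it has at least one edge to layer $k-1$. Vertices at distance $>k$ only gain chips or stay the same. Choosing $N \ge \max(0, -\min_v D(v))$ makes all of layer $k$, and everything beyond it, nonnegative. After doing this for $k = m, m-1, \dots, 1$, every $v \ne q$ is nonnegative. The resulting divisor differs from $D$ by a principal divisor, namely a sum of set-firing vectors, and so is linearly equivalent to $D$.

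An even simpler variant avoids the per-step bookkeeping by borrowing into each $S_k$ with \emph{increasing} multiplicities. Concretely, we take $D' = D - \sum_{k=1}^m N_k\,(\text{firing vector of } S_k)$ and choose $N_k$ large enough, processed from $k=1$ outward, so that the net gain at layer $k$ dominates both the losses caused by borrowing into $S_{k+1}$ and the original debt. In Lean it may be easiest to state a lemma of the form ``for every $k$ there is $D_k \sim D$ with $D_k(v)\ge 0$ whenever $\delta(v) \ge k$'', proved by downward induction on $k$ from $m+1$ to $1$.

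The main obstacle we expect is the formal infrastructure rather than the mathematics. First, we need the distance function and the fact that each vertex at distance $k\ge1$ has a neighbor at distance $k-1$. Alternatively, we can sidestep distance and use the cut-set formulation of connectivity: induct on the size of a set $T \ni q$ of ``already handled'' vertices. At each step we pick, by \docref{Basic}{graph_connected} applied to $T$, a vertex $w \notin T$ adjacent to $T$. Second, we need a clean lemma computing the effect of set-firing $S$ on a vertex $v$: for $v\in S$ the change is $-|E(v, V\setminus S)|$, and for $v\notin S$ it is $+|E(v,S)|$. We would prove this lemma once and reuse it.

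With the cut-set version, the induction borrows into the complement $V \setminus T$ enough times to fix all debt outside $T$ in one stroke. That approach fails, because borrowing into $V\setminus T$ only helps the boundary vertices. So the distance-layer argument above, with downward induction on $k$, is the one we would commit to.
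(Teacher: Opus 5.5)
Your distance-layer argument (downward induction on $k$, borrowing into $S_k=\{v:\delta(v)\ge k\}$) is correct, but it follows a different route from the paper's.

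\textbf{The paper's route.} The paper calls $S$ \emph{benevolent} if \emph{every} divisor is equivalent to one whose debt lies in $S$. It proves every nonempty $S$ is benevolent by induction on $|V(G)\setminus S|$:
\begin{enumerate}
\item choose $v\notin S$ with a neighbor $w\in S$, using cut-set connectivity;
\item use benevolence of $S\cup\{v\}$ to concentrate the debt there;
\item fire the single vertex $w$ enough times to pay off $v$.
\end{enumerate}
Firing $w$ only takes chips from $w\in S$, so no debt is created outside $S$. Taking $S=\{q\}$ gives the lemma.

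\textbf{Comparison.} Both arguments repair vertices outside-in along a growth order from $q$. The paper's version needs only single-vertex firings and the cut-set form of connectivity at each step. It has no distance function, no ``neighbor at distance $k-1$'' lemma, and no set-firing computation. That is exactly the infrastructure you identified as your main obstacle. Your version gives a more explicit certificate in return: the principal divisor is a combination of only $m$ nested set-firings.

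The paper's proof also shows that your dismissal of the cut-set route is too hasty. What fails is borrowing into all of $V\setminus T$ in one stroke. The cut-set route works cleanly if you add one vertex at a time, quantify the hypothesis over all divisors, and repair the new vertex by firing its one neighbor inside the debt-allowed set.

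\textbf{Two small corrections.}

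First, in the inductive step $N$ must be chosen from the \emph{current} divisor $D_{k+1}$, not the original $D$. Borrowing into $S_{k+1}$ can push layer $k$ deeper into debt, so the bound $N\ge -\min_v D(v)$ is not sufficient if $D$ means the original divisor. For example, take a path $q - a - b$ with a double edge between $a$ and $b$, and $D(b)=-5$. Using $N=5$ at both steps leaves $a$ with $-5$.

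Second, in your one-shot variant the dependency runs the opposite way from what you wrote. A layer-$k$ vertex loses $N_{k+1}$ chips per edge to layer $k+1$ and gains at least $N_k$. So each $N_k$ must dominate $N_{k+1}$: the $N_k$ have to be chosen from $k=m$ inward, with $N_1$ the largest, not ``from $k=1$ outward.''
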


\begin{proofsketch}
Call a subset $S \subseteq V(G)$ \docref{Basic}{benevolent} if for every divisor $D$ there exists a divisor $D' \sim D$ such that $D'(v) \ge 0$ for all $v \notin S$, i.e. if it is always possible to concentrate all debt on $S$. It suffices to prove that every nonempty set $S$ is benevolent (\docref{Basic}{benevolent_of_nonempty}). Proceed by induction on $|V(G) \setminus S|$. The base case $S = V(G)$ is trivial. Otherwise, identify a vertex $v \not\in S$ with a neighbor $w \in S$ (using connectivity), and assume that $S \cup \{v\}$ is benevolent, so we may concentrate debt there. Firing $w$ enough times then brings $v$ out of debt as well, while creating no new debt outside $S$, so $S$ is benevolent.
\end{proofsketch}

In a functional programming language such as Lean, this inductive proof is nicely captured as a recursive function, in which the proof for $S$ makes a recursive call to the proof for $S \cup \{v\}$.
A similar strategy is employed from here to convert a $q$-effective divisor into a $q$-reduced divisor.

\begin{lemma}[\docref{Basic}{q_effective_to_q_reduced}]
  If $D$ is a $q$-effective divisor on a connected graph, then there exists a $q$-reduced divisor $E \sim D$.
\end{lemma}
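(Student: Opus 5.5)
We prove the statement by a well-founded induction that mirrors the recursive style of the benevolence argument. The standard prose proof runs as follows: while $D$ is not $q$-reduced, there is a nonempty set $S \subseteq V(G)\setminus\{q\}$ whose firing creates no debt, and we fire it. To make termination transparent we attach to each $q$-effective divisor $D$ a natural-number measure that strictly decreases under each such legal set firing. Since all configurations reached stay $q$-effective and $\deg D$ is fixed, $D(q)$ is determined by the configuration and bounded above by $\deg D$. A legal firing of $S \not\ni q$ sends chips from $S$ to its complement. If $S$ has a neighbor at $q$, the value $D(q)$ strictly increases. Otherwise chips only move within $V\setminus\{q\}$.

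So the first measure I would try is lexicographic. The primary component is $\deg D - D(q) = \sum_{v\neq q} D(v) \ge 0$, which is nonincreasing under legal firings. The secondary component ranks how close chips are to $q$. Concretely, let $d(v)$ be the graph distance from $v$ to $q$, and use the weighted sum $\Phi(D) = \sum_{v \ne q} D(v)\,(N-d(v))$ for a large constant $N$, say $N = |V|$. This does not obviously decrease, however, since chips can move along edges inside $S$'s boundary in any direction.

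A cleaner option is the following argument, which I will commit to. Among all $q$-effective $D' \sim D$ reachable by legal set firings avoiding $q$, the set of configurations is finite. This holds because each $D'(v)$ lies in $[0, \deg D - D'(q)]$ and $D'(q)$ is monotone nondecreasing and bounded by $\deg D$. Pick $D'$ maximizing $D'(q)$, and among those pick one lexicographically extremal with respect to the vector $(\sum_{d(v)=k} D'(v))_{k}$ ordered by increasing $k$ (more chips closer to $q$ is better). I then claim $D'$ is $q$-reduced. Suppose not, so a legal firing of some nonempty $S \not\ni q$ exists. Take $k$ to be the minimal distance from $S$ to $q$, realized by a vertex $s \in S$ with $d(s)=k$. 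The vertex $s$ has a neighbor $w\notin S$ with $d(w)=k-1$: if $k=1$ then $w=q$, and $D'(q)$ increases. Otherwise, no vertex of $S$ lies at distance less than $k$, so firing $S$ adds chips at levels below $k$ and removes none there. This strictly improves the lexicographic vector at level $k-1$ while leaving $D'(q)$ unchanged, contradicting extremality.

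In Lean I would phrase this as strong induction on a single natural number encoding the lexicographic order. One candidate is $\sum_{v\ne q} D(v)\,B^{d(v)}$ with base $B > \deg D$, which encodes the distance profile. Here $D(q)$ is absorbed, since a chip arriving at $q$ leaves the sum and strictly decreases it. This avoids finiteness arguments entirely: a legal firing of $S$ strictly decreases this measure by the distance argument above. The main obstacle I expect is exactly this decrease lemma. Chips also move outward from $S$ to higher levels and within levels, so I must check that the gain at level $k-1$ dominates all changes at levels $\ge k$. This requires $B$ larger than the total number of chips, which is bounded by $\deg D - D(q)$ and so by the initial value. Formally this is a base-$B$ digit comparison argument, and I anticipate needing an auxiliary lemma bounding the chips at each level of $D$ by the total. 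An alternative is to run the recursion with an explicit lexicographic well-founded relation on finitely supported level vectors.
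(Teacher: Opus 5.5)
Your committed argument (the finiteness-plus-extremality paragraph) is correct, and it takes a genuinely different route from the paper. You fire only legal sets $S\not\ni q$ and track the level profile with respect to the distance $d(\cdot)$ to $q$. The key observation is sound: if $k=\min_{s\in S}d(s)$, then levels below $k$ only gain chips, and some vertex at level $k-1$ strictly gains.

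The paper uses neither distances nor finiteness. It calls $v$ \emph{active} if some firing script, with $q$ fired least and $v$ fired strictly more than $q$, yields a $q$-effective divisor. It then picks an active $v$ with an inactive neighbor $w$ and applies the witnessing script. Inactive vertices are necessarily fired the minimum number of times, so they lose no chips; $w$ strictly gains; and no inactive vertex becomes active. Hence the natural number $\sum_{v\text{ active}}D(v)$ strictly drops. This gives a clean structural recursion, at the cost of an abstract, non-algorithmic notion of activity.

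Your route needs a distance function, or any height function in which each $v\neq q$ has a neighbor one step lower, plus a finiteness or potential argument. In exchange it shows that \emph{every} sequence of legal set firings terminates. That is exactly what justifies the practical procedure of repeatedly firing the unburnt set from Dhar's algorithm.

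The single-number measure you propose for Lean, however, is wrong, and the problem is not the size of $B$. In $\sum_{v\ne q}D(v)B^{d(v)}$ the \emph{farthest} level is the most significant digit. That is the reverse of the lexicographic order your extremal argument uses, so chips pushed outward dominate. On the path with edges $\{q,a\},\{a,b\}$, take $D(a)=2$ and $D(b)=D(q)=0$. Firing $S=\{a\}$ is legal and changes the measure from $2B$ to $B^2$, which is an increase for every $B>2$. Enlarging $B$ only makes this worse, because the level-$(k-1)$ gain carries the smallest weight among the levels that change.

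The fix is to make the levels nearest $q$ most significant. Take $m\ge\max_v d(v)$, let $B$ exceed the initial value of $\sum_{v\ne q}D(v)$ (this quantity never increases), and set $\Psi(D)=\sum_v D(v)B^{m-d(v)}$. The first level whose count changes is some $j_0\le k-1$, and it gains at least one chip. Each later level $j\ge 1$ has its count in $[0,B-1]$ before and after, so it changes by at most $B-1$. The net change in $\Psi$ is therefore at least $B^{m-j_0}-(B-1)\sum_{i<m-j_0}B^{i}=1$. Consequently $\sum_{v\ne q}D(v)\bigl(B^m-B^{m-d(v)}\bigr)=B^m\deg D-\Psi(D)$ is a natural number that strictly decreases under every legal firing. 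That is the measure your induction should use.
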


\begin{proofsketch}
Call a vertex $v$ \docref{Basic}{active} if some sequence of firing moves, in which $q$ is fired the least and $v$ is fired strictly more than $q$, results in a divisor that is still $q$-effective. If $D$ has no active vertices, then it is $q$-reduced. Otherwise, use connectivity to choose an active vertex $v$ with an inactive neighbor $w$, and apply the firing moves witnessing the activity of $v$ to obtain $D' \sim D$. Define the \docref{Basic}{reduction_excess} of $D$ to be the sum of $D(v)$ over all active vertices $v$. Inactive vertices never lose chips in this process, the inactive neighbor $w$ strictly gains chips, and no inactive vertex becomes active; therefore $D'$ has strictly smaller reduction excess than $D$, and the proof may recurse on $D'$.
\end{proofsketch}

\begin{corollary}[\docref{Basic}{exists_q_reduced_representative}, \docref{RRGHelpers}{superstable_of_divisor}]
For every divisor $D$ and vertex $q$ in a connected graph, there exists a $q$-reduced divisor $E$ linearly equivalent to $D$. Equivalently, there exists a superstable configuration $c$ and integer $k$ such that $D \sim c - kq$.
\end{corollary}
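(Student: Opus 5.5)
The corollary follows by chaining the two preceding lemmas and then translating between divisors and configurations.

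Fix a divisor $D$ and a vertex $q$ of the connected graph $G$. First, apply \docref{Basic}{q_effective_exists} to obtain a $q$-effective divisor $D' \sim D$. Next, apply \docref{Basic}{q_effective_to_q_reduced} to $D'$ to obtain a $q$-reduced divisor $E \sim D'$. Linear equivalence is transitive: it means differing by an element of the subgroup of principal divisors, and a sum of two principal divisors is principal. Hence $E \sim D$, which is the first assertion.

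For the second assertion, let $c$ be the configuration associated to $E$. It agrees with $E$ away from $q$ and is zeroed at $q$, so $c(v) = E(v) \ge 0$ for $v \neq q$, and $c$ is a genuine element of \myinline|Config G q|. Set $k = -E(q)$. Then $E = c - kq$ as divisors, where $c - kq$ means the divisor equal to $c$ off $q$ and to $-k$ at $q$. By \docref{Config}{q_reduced_superstable_correspondence}, $c$ is superstable because $E$ is $q$-reduced. Thus $D \sim c - kq$.

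For the converse direction of the ``equivalently'', suppose $D \sim c - kq$ with $c$ superstable. Then the divisor $c - kq$ has no debt off $q$. Its configuration is $c$, so by the same correspondence it is $q$-reduced. This recovers the first form.

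The only genuinely delicate step is the bookkeeping between the \myinline|Config| and divisor types. One must check that $E(v) = c(v)$ for $v \ne q$ and that the divisor built from $c$ and $k$ coincides pointwise with $E$. This is done by case analysis on whether $v = q$, unfolding \docref{Config}{toConfig} and \docref{Config}{toDiv}. The two existence lemmas themselves are taken as given.
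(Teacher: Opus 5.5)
Your proposal is correct and is essentially the paper's argument. You chain the two preceding lemmas (first concentrate all debt at $q$, then reduce a $q$-effective divisor) using transitivity of linear equivalence, and you obtain the ``equivalently'' clause from the correspondence between $q$-reduced divisors and superstable configurations with $k = -E(q)$, which matches the paper's convention that $c - kq$ has value $-k$ at $q$.
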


In fact, this representative is unique (\docref{Basic}{unique_q_reduced}). This is an important fact in chip-firing theory, but we omit the proof sketch here since it not needed to prove Riemann--Roch.

Although our proof is phrased cosmetically as an algorithm, this is misleading: we have defined the set of active vertices abstractly, without an algorithm to find it. This is sufficient for a formal proof, but in applications one may want an algorithm. One important technique for part of this process, which also pays theoretical dividends, will be described next.

%% ============================================================
\section{Dhar's burning algorithm}
\label{sec:dhar}
%% ============================================================

A core algorithmic tool in chip-firing is Dhar's burning algorithm~\cite{dhar1990self}, which plays a role in two crucial links of the proof of Riemann--Roch. Formalizing the necessary aspects of Dhar's algorithm led to some subtly difficult design choices. The algorithm is quite natural to describe in English. Its input is a configuration $c$ with respect to a marked vertex $q$, in which there is no debt away from $q$ (as we have remarked, this last condition is assumed for all configurations in our Lean definition). Now we imagine that the vertex $q$ catches on fire. The fire spreads nondeterministically as follows: at each step, an edge may catch on fire if one of its endpoints has burned, or a vertex with $n$ chips may catch on fire if at least $n+1$ of its incident edges have caught on fire. The algorithm stops once no further steps are possible.

As we have described it, Dhar's algorithm produces the following data:
\begin{enumerate}
\item A set of ``burned vertices'' $B$;
\item An ordering on the set of burned vertices, namely the order in which they caught on fire;
\item An orientation on each burned edge, pointing out of the vertex that caught it on fire.
\end{enumerate}

These data are used in two distinct ways, depending on how much of the graph burns. If some of the graph remains unburned, then the set $B$ is important: its complement is a ``legal firing set,'' meaning that if we now fire all of those vertices (ironically, given that they had just escaped the ``fire''), no additional debt is created. Therefore the burning algorithm provides a practical way to move a $q$-effective divisor closer to being $q$-reduced, or equivalently a configuration closer to being superstable.
On the other hand, if the entire graph \emph{does} burn, then it is the path of the fire, that is, the acyclic orientation of edges, that is important; it will be used to construct a moderator in the next section.

In our work, we found that the data structure most suitable for the proofs we needed focuses on the second datum listed above: the order in which the vertices burned. In fact, the implementation becomes even cleaner if we consider this list in \emph{reverse} order, because doing so is quite natural in functional programming.

\begin{definition}[\docref{Config}{burn_list}]
A \emph{burn list} for a configuration $c$ with respect to a vertex $q$ is a list $v_1, v_2, \dots, v_n, q$ of distinct vertices such that, for each $i \in \{1, \ldots, n\}$, the number of edges from $v_i$ to $\{v_{i+1}, \ldots, v_n, q\}$ is greater than $c(v_i)$. In other words, $v_i$ can catch on fire next if the vertices $v_{i+1}, \ldots, v_n, q$ have already caught on fire.
\end{definition}

If the configuration $c$ is superstable, and $v_1, v_2, \dots, v_n, q$ is a burn list, then the set $V(G) \setminus \{v_1, v_2, \dots, v_n, q\}$ is not a legal firing set unless it is empty. This means that there exists some $v_0$ for which prepending $v_0$ gives a burn list one vertex longer. That is, we can identify the next vertex $v_0$ to burn. Iterating this process gives the following lemma, which encapsulates Dhar's burning algorithm as it is used in our formalization.

\begin{lemma}[\docref{Config}{superstable_burn_list}]
If $c$ is a superstable configuration, then there exists a burn list $v_1, v_2, \dots, v_n, q$ that includes every vertex of $G$.
\end{lemma}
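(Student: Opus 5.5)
We build the full burn list by strong induction on the number of vertices not yet in a burn list (equivalently, a recursion that terminates because lists of distinct vertices have length at most $|V(G)|$). The starting point is the one-element list $q$, which is vacuously a burn list. The inductive step is the observation made just before the statement: if $v_1,\dots,v_n,q$ is a burn list for a superstable $c$ and some vertex is missing, then we can prepend a new vertex $v_0$. Iterating until no vertex is missing gives a burn list containing all of $V(G)$.

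For the inductive step, let $L = \{v_1,\dots,v_n,q\}$ and suppose $S = V(G)\setminus L$ is nonempty. Since $q \in L$, the set $S$ does not contain $q$. By superstability, set-firing $S$ puts some vertex $v_0 \in S$ into debt. The effect of set-firing $S$ on a vertex $v_0 \in S$ is to subtract the number of edges from $v_0$ to $V(G)\setminus S = L$. So debt at $v_0$ means $c(v_0) - \#\{\text{edges from } v_0 \text{ to } L\} < 0$, that is, the number of edges from $v_0$ to $\{v_1,\dots,v_n,q\}$ exceeds $c(v_0)$. This is exactly the burn-list condition for the new head $v_0$. The conditions for $v_1,\dots,v_n$ are unchanged, because each refers only to the vertices after it in the list. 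Distinctness holds because $v_0 \notin L$.

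I expect the main obstacle to be the bookkeeping that translates set-firing into this count. We must compute, from the definition of the firing vector summed over $S$, that the value at $v_0 \in S$ drops by exactly $\sum_{w \notin S} \mathrm{num\_edges}(v_0,w)$. The edges internal to $S$ cancel: $v_0$ sends one chip along each such edge and receives one back. We must also handle the configuration convention, where $q$'s value is re-zeroed; this is harmless since $q \notin S$. I would prove this as a standalone lemma about set-firing a set $S$ with $v \in S$. It will likely reuse the computation already needed for the definition of superstable and \docref{Config}{q_reduced_superstable_correspondence}.

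For termination, I would phrase the recursion on the measure $|V(G)| - \text{length}$, with each step increasing the length by one. Alternatively, we can take a burn list of maximal length, which exists since lengths are bounded by $|V(G)|$. If that list missed a vertex, the step above would extend it, contradicting maximality. I would likely use the maximal-length argument in Lean, since it avoids writing an explicit recursive function.
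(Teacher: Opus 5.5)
Your proposal is correct and follows the paper's argument: superstability applied to the complement $V(G)\setminus\{v_1,\dots,v_n,q\}$, a set not containing $q$, yields a vertex $v_0$ with more edges into the current list than $c(v_0)$, so $v_0$ can be prepended. Iterating from the list $q$ until every vertex is included gives the lemma. Your termination measure (or maximal-length argument) just makes precise the paper's ``iterating this process.''
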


\begin{remark}
Although Dhar's algorithm is very much guiding the formalization, our proof of Riemann--Roch does not use a true implementation of Dhar's algorithm that one would use in practical computations; it only refers to the \emph{existence} of a burn list that might have resulted from this algorithm. An experimental implementation of Dhar's algorithm, in its more traditional form, can be found in \myinline|Algorithms.lean|.
\end{remark}

%% ============================================================
\section{Orientations and moderators}
\label{sec:orient}
%% ============================================================

To construct the moderators providing the essential symmetry in Lemma~\ref{lem:moderators}, we exploit a remarkable bijection between the acyclic orientations of the graph with unique source $q$ and its maximal superstable configurations with respect to $q$.

An orientation (\docref{Orientation}{CFOrientation}) of $G$ assigns a direction to each edge. We formalize this as a multiset of ordered pairs satisfying two properties: it is count-preserving, and it has no bidirectional edges.
An orientation is \emph{acyclic} if every directed path is non-repeating. The orientation divisor $D(\mathcal{O})$ assigns $\indeg_{\mathcal{O}}(v) - 1$ to each vertex $v$. These are the \emph{moderators} promised in Lemma~\ref{lem:moderators}: we define (\docref{RRGHelpers}{is_moderator})
\[ \mathcal{M} = \{D(\mathcal{O}) : \mathcal{O} \text{ is an acyclic orientation}\}.\]

\begin{remark}
The moderators are precisely the divisors $\nu_P$ used in Baker and Norine's original proof~\cite{baker2007riemann}: for a total order $<_P$ on $V(G)$, their divisor $\nu_P$, which assigns $\#\{e = vw : w <_P v\} - 1$ chips to each vertex $v$, equals $D(\mathcal{O}_P)$ for the acyclic orientation $\mathcal{O}_P$ directing each edge toward its $<_P$-larger endpoint, and every acyclic orientation arises in this way from any of its topological orders. Under this dictionary, reversing an orientation corresponds to reversing the total order, which is how the duality appears in \cite{baker2007riemann}. A systematic development of Riemann--Roch theory from the perspective of orientations may be found in \cite{backman2017orientations}.
\end{remark}

\begin{proof}[Proof sketch of Lemma~\ref{lem:moderators}]
Denote by $\overline{\mathcal{O}}$ the reverse of an orientation $\mathcal{O}$. Then it follows directly from definitions that $D(\mathcal{O}) + D(\overline{\mathcal{O}}) = K$, which gives the symmetry $\mathcal{M} = K - \mathcal{M}$. The degree of any moderator is $|E(G)|-|V(G)| = g-1$, since $\sum_{v \in V(G)} \indeg_{\mathcal{O}}(v) = |E(G)|$. This last calculation, while easily glossed over in prose, requires some care in the formalization; our strategy is to reinterpret it as a sum over $V(G) \times E(G)$ and change the order of summation; this is encapsulated in the reusable lemma \docref{Basic}{sum_card_filter_eq_mul}, which can be viewed as a generalized handshaking lemma for multigraphs.

The fact that moderators are unwinnable is proved in \docref{Orientation}{ordiv_unwinnable}; we only sketch it here. Suppose that $D(\mathcal{O})$ is winnable. Let $S$ be the set of vertices that are fired the most in a winning sequence of firing moves for $D(\mathcal{O})$. In the formalization, the firing sequence is represented by a function $\sigma : V \to \Z$, and $S$ is the set where $\sigma$ attains its maximum value. Since $\mathcal{O}$ is acyclic, we can find at least one ``relative source'' for $\mathcal{O}$ in $S$, meaning a vertex $v$ with no directed edges to it from another vertex in $S$. Then the firing moves send at least one chip away from $v$ along each incoming edge; since $v$ begins with only $\indeg_{\mathcal{O}}(v) - 1$ chips, it ends in debt, a contradiction. So $D(\mathcal{O})$ is unwinnable.

Finally, \docref{RRGHelpers}{moderator_of_unwinnable} asserts that, up to linear equivalence, every unwinnable divisor is dominated by a moderator. The formalized proof proceeds in three steps. First, we may choose a source vertex $q$ and assume $D$ is $q$-reduced. Since it is unwinnable, it has the form $D = c - kq$, where $c$ is a superstable configuration (\docref{RRGHelpers}{superstable_of_divisor}) and $k > 0$ (\docref{RRGHelpers}{superstable_of_divisor_negative_k}). Next, we may add an effective divisor $E$ to $c$ to obtain a \emph{maximal} superstable configuration $c'$, with \docref{Orientation}{maximal_superstable_exists}. Finally, we may sort the vertices $V(G)$ into a burn list $v_1, \ldots, v_n, q$ for $c'$, and from this define an orientation $\mathcal{O}$ as follows: every edge $(u,v)$ is oriented from the later to the earlier element in the burn list (in other words, it is oriented in the direction that the fire spreads); this is encapsulated in \docref{Orientation}{maximal_superstable_orientation}. The definition of burn list guarantees that $c' - q \le D(\mathcal{O})$, and the maximality of $c'$ forces equality: $D(\mathcal{O}) = c' - q$. Setting $F = (c' - c) + (k-1)q$, which is effective since $k > 0$, we conclude that $D + F \sim D(\mathcal{O})$, as desired.
\end{proof}

This completes the proof of Riemann--Roch for graphs. Although it is not needed for Riemann--Roch, our repository proves a bit more, following \cite{corry2018divisors}: not only does every maximal superstable configuration sit below a moderator, there is in fact a bijection between maximal superstable configurations with respect to $q$ and acyclic orientations with unique source $q$; this is proved as \docref{Orientation}{orientation_superstable_bijection}.

%% ============================================================
\section{Future work and formalization challenges}
\label{sec:future}
%% ============================================================

This formalization opens up several avenues for future development. One is graph-theoretic Brill--Noether theory. The tropical proof of Brill--Noether~\cite{cools2012tropical} relies on chip-firing on chains of loops, the combinatorics of which may be fertile ground for building on our repository. Moreover, the still-open Brill--Noether conjecture for graphs (stated below) is a tantalizing longer-term challenge that would require both theoretical progress and formalization work. With this in mind, we have included formalized \emph{statements} in our repository of the following four claims, in the hopes that attempts to formalize them, in whole or part, will provide a challenge and benchmark for proof formalization. Two of these conjectures are based on the following terminology.

\begin{definition}
The divisorial \docref{RiemannRoch}{gonality} of a connected graph $G$ is the minimum degree $d$ of a divisor $D$ such that $r(D) \ge 1$.
\end{definition}

The four challenges are as follows.

\begin{enumerate}
\item \docref{RiemannRoch}{max_gonality_existence} (known, not yet formalized): for every $g \ge 0$, there exists a connected graph of genus $g$ with gonality exactly $\lceil \frac12 g \rceil + 1$.

This was conjectured in \cite[Conjecture 3.10(2)]{baker2008}, and proved in \cite{cools2012tropical} using chains of loops, and again by a different construction in \cite{hendrey2018}. In fact, for $g \ge 1$ every gonality $k$ between $2$ and $\lceil \frac12 g \rceil + 1$ is possible, for example using a chain of loops with torsion orders $k$; see e.g. \cite{pflueger2017brill}.

\item \docref{RiemannRoch}{brill_noether_general_existence} (known, not yet formalized): for every $g \ge 0$, there exists a connected graph of genus $g$ that is \emph{Brill--Noether general} in the following sense: if the graph has a divisor $D$ of rank $r$ and degree $d$, then the Brill--Noether number $\rho = g - (r+1)(g-d+r)$ satisfies $\rho \ge 0$. Our formalized statement states this in following form, equivalent by Riemann--Roch: for every divisor $D$ on $G$, $(r(D)+1)\,(r(K-D)+1) \le g$.

A slightly different form of this was conjectured in \cite[Conjecture 3.9(2)]{baker2008}, and it was proved using chains of loops in \cite{cools2012tropical}. Note that this implies that the gonality of $G$ is at least $\lceil \frac12 g \rceil + 1$, since $r = 1$ implies $\rho = -g - 2 + 2d$, so a slightly strengthened form of this statement will also solve the first challenge.

\item \docref{RiemannRoch}{gonality_conjecture} (open problem): the gonality of every connected graph of genus $g$ is at most $\lceil \frac12 g \rceil + 1$.

This is conjectured in \cite[Conjecture 3.10(1)]{baker2008}. It is known for \emph{metric} graphs; this was first proved via algebraic geometry \cite[Theorem 3.12, $r=1$ case]{baker2008} and by pure combinatorics in \cite[Theorem 2]{draismaVargas21}.

\item \docref{RiemannRoch}{brill_noether_conjecture} (open problem): for every connected graph $G$ of genus $g$ and all positive integers $d,r$ satisfying $\rho = g - (r+1)(g-d+r) \ge 0$, the set $W^r_d(G)$ of degree-$d$ divisor classes with rank $r(D) \ge r$ is nonempty.

This is conjectured in \cite[Conjecture 3.9(1)]{baker2008}. The case $r=1$ is the gonality conjecture. It is known for \emph{metric} graphs \cite[Theorem 3.12]{baker2008}, but the only known argument requires specialization from algebraic curves. It remains open to find a combinatorial proof even of the metric graph version.
\end{enumerate}

A nice introduction to graph gonality can be found in \cite{beougher2023chip} and the references therein, where the reader can find many other theorems and conjectures that may be excellent formalization projects.

Another promising direction for future formalization work is in combinatorial \emph{Hurwitz--Brill--Noether theory}, which is the study of the sets $W^r_d(G)$ defined above when $G$ is assumed to have a specific gonality $k$. We have not yet formalized any statements in this direction, but the reader may find many interesting candidates in e.g. \cite{pflueger2017brill, jensenranganathan2021fixedgonality, cookpowelljensen2022components, cookpowelljensen2022hurwitz, jensenSawyer}.

In quite another direction, a longer-term goal is formalizing Baker's specialization lemma~\cite{baker2008}, connecting chip-firing games to algebraic geometry via degeneration of algebraic curves. Such an effort would require extensive work in formalized algebraic geometry.

Finally, we express our hope that chip-firing games, and the work in our repository, can eventually be integrated into the \Mathlib library. Such work will take time and care, with many important design decisions, such as the proper level of generality for the core definitions. In our opinion, the first major milestone is probably to incorporate the proof of existence for $q$-reduced representatives, or equivalently superstable configurations.
The authors would welcome collaborators who wish to work towards this goal.

\bibliographystyle{plain}
\bibliography{references}

\end{document}